\numberwithin{equation}{section}
\numberwithin{figure}{section}
\theoremstyle{plain}
\newtheorem{thm}{\protect\theoremname}
  \theoremstyle{plain}
  \newtheorem{lem}[thm]{\protect\lemmaname}
  \theoremstyle{plain}
  \newtheorem{prop}[thm]{\protect\propositionname}
  \theoremstyle{plain}
  \newtheorem{cor}[thm]{\protect\corollaryname}
  \theoremstyle{plain}
  \newtheorem*{thm*}{\protect\theoremname}
  \theoremstyle{remark}
  \newtheorem{rem}[thm]{\protect\remarkname}
  \providecommand{\corollaryname}{Corollary}
  \providecommand{\lemmaname}{Lemma}
  \providecommand{\propositionname}{Proposition}
  \providecommand{\remarkname}{Remark}
  \providecommand{\theoremname}{Theorem}
\providecommand{\theoremname}{Theorem}
\title{A note on depth preservation}
\author{Manish Mishra}
\address{
Department of Mathematics \\
Indian Institute for Science Education and Research \\
Dr. Homi Bhabha Road, Pashan \\
Pune 411 008 \\
India 
}
\email{manish@iiserpune.ac.in}
\author{Basudev Pattanayak}
\email{basudev.pattanayak@students.iiserpune.ac.in}
\begin{document}

\begin{abstract}
We show that for a wildly ramified torus, depth is not preserved in
general under local Langlands correspondence for tori.
\end{abstract}

\maketitle

\section{Introduction}

Let $K$ be a non-archimedean local field and let $W_{K}$ denote
its Weil group. Local class field theory (LCFT) tells us that there
is a canonical isomorphism $K^{\times}\cong W_{K}^{\mathrm{ab}}$
and this isomorphism respects the numbering on the filtration subgroups
$\{K_{r}^{\times}\}_{r\geq0}$ of $K^{\times}$ and the upper numbering
on the filtration subgroups $\{W_{K}^{r}\}_{r\geq0}$ of $W_{K}^{\mathrm{}}$.
Local Langlands Correspondence (LLC) stipulates a vast generalization
of the LCFT isomorphism. In LLC, irreducible representations $\mathrm{Irr}(G(K))$
of the $K$-points of a reductive $K$-group $G$ are expected to
be parametrized by arithmetic objects called Langlands parameters
$\Phi(G)$ in a certain natural way. For each $\pi\in\mathrm{Irr}(G(K))$,
Moy-Prasad theory associates an invariant called depth $\mathrm{dep}(\pi)$.
Also for each $\phi\in\Phi(G)$, one defines the notion of depth $\mathrm{dep}(\phi)$.
It is the smallest number such that $\phi$ is trivial on $W_{K}^{r}$
for all $r>\mathrm{dep}(\phi)$. If $\phi$ associates to $\pi$ under
LLC, then one expects that quite fairly $\mathrm{dep}(\pi)=\mathrm{dep}(\phi)$.
This is known in many cases (see the introduction in \cite{ABPS2016}
for a survey). However, counter examples have been constructed for
inner forms of $\mathrm{SL}_{n}(K)$ \cite{ABPS2016} and in the case
of $\mathrm{SL}_{2}(K)$ when $K$ has characteristic $2$ \cite{AMPS2017}. 

Now let $T=\mathrm{R}_{K^{\prime}/K}\mathbb{G}_{\mathrm{m}}$ where
$K^{\prime}$ is a finite separable extension of $K$ and $R_{K^{\prime}/K}$
denotes the Weil restriction and let $\lambda_{T}: \chi\in\mathrm{Irr}(T(K))\mapsto \lambda_{T}(\chi) \in\Phi(T)$
under LLC. In this note, we show that $\varphi_{K^{\prime}/K}(e\cdot\mathrm{dep}(\chi))=\mathrm{dep}(\lambda_{T}(\chi))$
where $\varphi_{K^{\prime}/K}$ is the Hasse-Herbrand function and
$e$ is the ramification index of $K^{\prime}/K$. Thus for all postitive
depth characters $\chi$, $\mathrm{dep}(\lambda_{T}(\chi))>\mathrm{dep}(\chi)$.
When $T$ is a tamely induced wildly ramified torus (see Sec. \ref{subsec:tame induced}), we show that $T(K)$ admits
characters for which depth is not preserved under LLC. In Section \ref{sec:Example},
we compute Hasse-Herbrand function for a certain wildly ramified extension
of a cyclotomic field to illustrate the failure of depth preservation. 

The proofs in Section \ref{sec:depthchangeLLC} follow closely the proofs in \cite{Yu} and \cite{MM2015}.
\section{Review of ramification groups}

Let $K$ be a non-archimedean local field and let $L$ be a finite
Galois extension of $K$. Write $\mathcal{O}_{L}$, $\mathfrak{p}_{L}$
for the ring of integers of $L$ and the maximal ideal of $\mathcal{O}_{L}$.
For $i\geq-1$, define $G_{i}$ to be the set of all $s\in G:=\mathrm{Gal}(L/K)$
such that $s$ operates trivially on $\mathcal{O}_{L}/\mathfrak{p}_{L}^{i+1}$.
Then $G_{-1}=G$. The groups $G_{i}$ are called ramification groups.
They form a decreasing filtration of normal subgroups. Extend the
definition of $G_{u}$ for all real numbers $u\geq-1$ by setting
\[
G_{u}=G_{i}\text{ where }i\text{ is the least integer }\geq u.
\]

This numbering of ramification groups is called \textit{lower numbering}.
Lower numbering behaves well with respect to intersections, i.e.,
if $H$ is a subgroup of $G$, then $G_{u}\cap H=H_{u}$. 

\subsubsection*{Upper numbering of ramification groups}

Define $\varphi_{L/K}:[-1,\infty)\rightarrow\mathbb{R}$ to be the
map $r\mapsto\int_{0}^{r}\frac{1}{(G_{0}:G_{t})}dt$ where $(G_{0}:G_{u}):=(G_{u}:G_{0})^{-1}$
for $u\in[-1,0)$. The function $\varphi_{L/K}$ is called the Hasse-Herbrand
function. It has the basic properties \cite{Serre}:

\begin{itemize}
\item[(a)]

$\varphi_{L/K}$ is continuous, piecewise linear, increasing and concave.

\item[(b)]$\varphi_{L/K}(0)=0$.

\item[(c)]$\varphi_{L/K}$ is a homeomorphism of $[-1,\infty)$ onto
itself. 

\item[(d)]If $H$ is a normal subgroup of $G$, then $\varphi_{L/K}=\varphi_{L^{H}/K}\circ\varphi_{L/L^{H}}$.

\end{itemize}

If an extension $M/K$ is not Galois, define $\varphi_{M/K}=\varphi_{M^{\prime}/K}\circ\varphi_{M/M^{\prime}}^{-1}$,
where $M^{\prime}$ is a Galois extension of $K$ containing $M$.
The inverse $\varphi_{M/K}^{-1}$ is denoted $\psi_{M/K}$.

Define an \textit{upper numbering }on ramification groups by setting
$G^{v}=G_{u}$ if $v=\varphi_{L/K}(u)$. Upper numbering behaves well
with respect to quotients, i.e., if $H$ is a normal subgroup of $G$,
then
\[
(G/H)^{v}=G^{v}H/H.
\]

For an infinite Galois extension $\Omega$ of $K$, define the ramification
groups on $G=\mathrm{Gal}(\Omega/K)$ by:
\[
G^{v}=\underset{F/K\text{ finite}}{\underleftarrow{\mathrm{lim}}}\mathrm{Gal}(F/K)^{v}.
\]

Now let $L/K$ be Galois extension of local fields and let $F$ be
a finite extension of $K$ contained in $L$. Write $G=\mathrm{Gal}(L/K)$
and $H=\mathrm{Gal}(L/F)$. 
\begin{lem}
\label{lem:psi}For all $r\geq0$, $G^{r}\cap H=H^{\psi_{F/K}(r)}$.
\end{lem}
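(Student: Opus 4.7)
My plan is to translate the upper-numbering identity into a lower-numbering statement (where intersections with subgroups behave cleanly) and then translate back using the appropriate Hasse-Herbrand functions. The ingredients I would use are all already recalled in the review: the definition $G^r = G_{\psi_{L/K}(r)}$ of upper numbering, the compatibility of the lower numbering with intersection ($G_u \cap H = H_u$), and a Herbrand-type tower identity for the Hasse-Herbrand function in the tower $K \subset F \subset L$.

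Concretely, I would first write $G^r = G_{\psi_{L/K}(r)}$, intersect with $H$ using the lower-numbering compatibility to get $G^r \cap H = H_{\psi_{L/K}(r)}$, and then rewrite this in the upper numbering on $H$ via the identity $H_u = H^{\varphi_{L/F}(u)}$ to obtain $G^r\cap H = H^{\varphi_{L/F}(\psi_{L/K}(r))}$. At this point the lemma reduces to the identity of real numbers
\[
\varphi_{L/F}(\psi_{L/K}(r)) = \psi_{F/K}(r),
\]
or equivalently, after taking inverses, to the tower formula $\varphi_{L/K} = \varphi_{F/K}\circ \varphi_{L/F}$.

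The only step that requires care -- and hence the main obstacle -- is the tower identity above, because $F/K$ is not assumed to be Galois and property~(d) of the review is stated only for normal subgroups of a Galois extension. The way around this is to exploit the fact that, since $L/K$ is Galois, $L$ already contains a Galois closure of $F$ over $K$, so the definition of $\varphi_{F/K}$ given in the review may be applied with the auxiliary Galois extension $M' = L$; this yields $\varphi_{F/K} = \varphi_{L/K}\circ\psi_{L/F}$ directly from the definition. Composing on the right with $\varphi_{L/F}$ gives $\varphi_{F/K}\circ\varphi_{L/F} = \varphi_{L/K}$, as needed. Once this is established, the chain of equalities in the previous paragraph finishes the proof in one line.
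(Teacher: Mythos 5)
Your mechanism is exactly the one the paper uses, and your handling of the tower formula for the non-Galois $F/K$ (via $\varphi_{F/K}=\varphi_{L/K}\circ\psi_{L/F}$, hence $\varphi_{F/K}\circ\varphi_{L/F}=\varphi_{L/K}$) is both correct and the point that requires care. However, there is a genuine gap: you treat $L/K$ as if it were finite, whereas the lemma is stated for a general (possibly infinite) Galois extension $L/K$ --- note that it is placed immediately after the definition of upper-numbering ramification groups for infinite extensions, and it is later applied with $G$ and $H$ the ramification filtrations on Weil groups of local fields, which come from the infinite extension $\bar{K}/K$. For infinite $L/K$ the lower numbering $G_u$ and the functions $\varphi_{L/K},\psi_{L/K},\varphi_{L/F}$ you invoke are not defined (the lower numbering famously does not pass to the inverse limit), so the very first step $G^r=G_{\psi_{L/K}(r)}$ has no meaning in general.

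The paper closes this gap by running your exact calculation not on $L/K$ but on a finite Galois subextension $E/K$ with $F\subseteq E\subseteq L$: writing $I=\mathrm{Gal}(L/E)$, one shows $(G/I)^r\cap(H/I)=(H/I)^{\psi_{F/K}(r)}$ by precisely the lower-numbering/tower-formula argument you describe, and then takes the inverse limit over all such $E$, using that upper numbering is compatible with passage to quotients and limits. Your proof is therefore the correct argument at the finite level, but it is missing the reduction to finite levels and the limiting step, which are what make the statement valid in the generality actually needed.
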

\begin{proof}
Let $E$ be a fintie Galois extension of $K$ in $L$ containing $F$.
Write $I=\mathrm{Gal}(L/E)$. Then 
\begin{eqnarray*}
(G/I)^{r}\cap(H/I) & = & (G/I)_{\psi_{E/K}(r)}\cap(H/I)\\
 & = & (H/I)_{\psi_{E/K}(r)}\\
 & = & (H/I)^{\varphi_{E/F}(\psi_{E/K}(r))}\\
 & = & (H/I)^{\varphi_{E/F}\psi_{E/F}\psi_{F/K}(r)}\\
 & = & (H/I)^{\psi_{F/K}(r)}.
\end{eqnarray*}
 The lemma now follows by taking inverse limit over $E$.
\end{proof}

\section{Notion of depth}

Let $G=\mathrm{Gal}(L/K)$ where $L$ and $K$ are local fields and
let $M$ be a $G$-module. Define the depth of $M$ to be: 
\[
\mathrm{dep}_{G}(M)=\mathrm{inf}\{r\geq0\mid M^{G^{s}}\neq0\text{ }\forall\text{ }s>r\}.
\]

Define the depth of a co-cycle $\varphi\in\mathrm{H}^{1}(G,M)$ to
be:
\[
\mathrm{dep}_{G}(\varphi)=\mathrm{inf}\{r\geq0\mid G^{s}\subset\mathrm{ker}(\varphi)\text{ }\forall\text{}s>r\}.
\]

\section{Depth change under induction}

Let $G=\mathrm{Gal}(L/K)$ and $H=\mathrm{Gal}(L/F)$ where $K\subseteq F\subseteq L$
and $F/K$ finite Galois. Let $N$ be an $H$-module. 
\begin{prop}
$\mathrm{dep}_{H}(N)=\psi_{F/K}(\mathrm{dep}_{G}(\mathrm{Ind}_{H}^{G}N)).$
\end{prop}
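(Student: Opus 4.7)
The plan is to compute $M^{G^{s}}$ for $M=\mathrm{Ind}_{H}^{G}N$ via a Mackey-type decomposition, use Lemma \ref{lem:psi} to convert $H \cap G^{s}$ into $H^{\psi_{F/K}(s)}$, and then conclude by a change of variables using the homeomorphism property of $\psi_{F/K}$.

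First I would record the relevant simplifications. Since $F/K$ is Galois, $H$ is normal in $G$; the upper-numbering ramification subgroups $G^{s}$ are normal in $G$; and $[G:H]=[F:K]$ is finite, so $\mathrm{Ind}_{H}^{G}N$ is an honest finite direct sum and Mackey's formula applies with no profinite subtleties (even if $L/K$ is infinite). Combining Mackey's decomposition with Frobenius reciprocity for invariants, I would obtain
\[
M^{G^{s}} \;\cong\; \bigoplus_{g\in G^{s}\backslash G/H} N^{g^{-1}G^{s}g\,\cap\,H}.
\]
By normality of $G^{s}$ in $G$, $g^{-1}G^{s}g = G^{s}$, so each summand is $N^{G^{s}\cap H}$. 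Lemma \ref{lem:psi} identifies $G^{s}\cap H$ with $H^{\psi_{F/K}(s)}$, which yields the key equivalence
\[
M^{G^{s}}\neq 0 \;\Longleftrightarrow\; N^{H^{\psi_{F/K}(s)}}\neq 0.
\]

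With this equivalence in hand, the depth identity is a change of variables. Because $\psi_{F/K}$ is a continuous strictly increasing bijection of $[0,\infty)$ onto itself, the condition ``$M^{G^{s}}\neq 0$ for all $s>r$'' is equivalent to ``$N^{H^{t}}\neq 0$ for all $t>\psi_{F/K}(r)$''. Taking infima over $r$ on both sides then gives $\psi_{F/K}(\mathrm{dep}_{G}(M))=\mathrm{dep}_{H}(N)$, which is the claim.

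The main obstacle is the Mackey step, and in particular justifying that the double-coset sum collapses so cleanly: one needs both the normality of $H$ (from $F/K$ being Galois) and the normality of $G^{s}$ (an upper-numbering fact), and one must be slightly careful about the conjugation appearing inside Mackey. Once these are in place, the role of Lemma \ref{lem:psi} and the homeomorphism property of $\psi_{F/K}$ are transparent. I do not expect any real difficulty beyond writing the Mackey computation carefully, since the finite index $[G:H]<\infty$ reduces the situation to the finite-group case even when $L/K$ is infinite.
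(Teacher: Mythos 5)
Your proof is correct and follows essentially the same route as the paper's: apply Mackey's decomposition to $\mathrm{Res}_{G^{s}}\mathrm{Ind}_{H}^{G}N$, identify the subgroup that appears via Lemma \ref{lem:psi} as $H^{\psi_{F/K}(s)}$, conclude the nonvanishing equivalence $M^{G^{s}}\neq 0 \iff N^{H^{\psi_{F/K}(s)}}\neq 0$, and finish by a change of variables using the homeomorphism property of $\psi_{F/K}$. The only cosmetic differences are that you merge Mackey with Frobenius reciprocity for invariants in a single step and use normality of $G^{s}$ to remove the conjugate, whereas the paper keeps $\mathrm{Ind}_{G^{r}\cap H}^{G^{r}}N^{g}$ explicit and implicitly uses normality of $H$; both are equivalent justifications for the same computation.
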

\begin{proof}
By Mackey theory, 
\[
\mathrm{Res}_{G^{r}}(\mathrm{Ind}_{H}^{G}N)=\bigoplus_{g\in G^{r}\backslash G\//H}\mathrm{Ind}_{G^{r}\cap H}^{G^{r}}N^{g}.
\]
 Here $\mathrm{Res}$ denotes the restriction functor and $N^{g}$
denotes the $g$-twisted module $N$. By Lemma 1, $G^{r}\cap H=H^{\psi_{F/K}(r)}$.
Thus 
\begin{eqnarray*}
(\mathrm{Ind}_{H}^{G}N)^{G^{r}}\neq0 & \iff & (\mathrm{Ind}_{H^{\psi_{F/K}(r)}}^{G^{r}}N^{g})^{G^{r}}\neq0\text{ for some }g\in G^{r}\backslash G\//H\\
 & \iff & (N^{g})^{H^{\psi_{F/K}(r)}}\neq0\\
 & \iff & (N)^{H^{\psi_{F/K}(r)}}\neq0.
\end{eqnarray*}
\end{proof}

\section{Depth change under Shapiro's isomorphism}

Again $G=\mathrm{Gal}(L/K)$ and $H=\mathrm{Gal}(L/F)$ where $K\subseteq F\subseteq L$
and $F/K$ is any finite extension and let $N$ be an $H$-module. 

Shapiro's lemma states that the map
\[
\mathrm{Sh}:\mathrm{H}^{1}(G,\mathrm{Ind}_{H}^{G}N)\rightarrow\mathrm{H}^{1}(H,N)
\]
defined by
\[
\gamma\mapsto(h\mapsto\gamma(h)(1))
\]
is an isomorphism. We wish to relate the depth of co-cycles under
this isomorphism. We first observe the following:
\begin{lem}
\label{lem:ABC}Let A be a group, $B$ and $C$ subgroups of $A$
with $C$ being normal in $A$. Let $M$ be a $B$-module. Then there
is a canonical isomorphism of $A/C$-modules:
\[
(\mathrm{Ind}_{B}^{A}M)^{C}\cong\mathrm{Ind}_{B/B\cap C}^{A/C}M{}^{C\cap B}.
\]
\end{lem}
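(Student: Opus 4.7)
The statement is purely formal, so the plan is simply to write down explicit mutually inverse $A/C$-equivariant maps. Using the convention that $\mathrm{Ind}_B^A M$ consists of functions $f : A \to M$ satisfying $f(bg) = b \cdot f(g)$ for all $b \in B$, $g \in A$, with $A$-action $(a \cdot f)(g) = f(ga)$, I would first define the forward map $\Phi : (\mathrm{Ind}_B^A M)^C \to \mathrm{Ind}_{B/(B \cap C)}^{A/C} M^{B \cap C}$ by $\Phi(f)(\bar g) := f(g)$.

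Three things need checking for $\Phi$. That $f(g)$ depends only on the coset $gC$ is exactly $C$-invariance of $f$: $(c \cdot f)(g) = f(gc) = f(g)$. To see that $f(g) \in M^{B \cap C}$, for $c \in B \cap C$ one uses left $B$-equivariance to write $c \cdot f(g) = f(cg)$ and then rewrites $cg = g \cdot (g^{-1} c g)$ with $g^{-1} c g \in C$ by normality of $C$ in $A$, so $f(cg) = f(g)$ by $C$-invariance. Finally, $B/(B\cap C)$-equivariance of $\Phi(f)$ is immediate from the left $B$-equivariance of $f$, noting that the $B$-action on $M^{B\cap C}$ descends through $B/(B\cap C)$ (again using normality, to ensure $B$ preserves $M^{B \cap C}$).

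The candidate inverse $\Psi$ sends $\tilde f$ to the function $g \mapsto \tilde f(\bar g)$, viewed as $M$-valued via the inclusion $M^{B \cap C} \hookrightarrow M$; the induction condition and $C$-invariance of $\Psi(\tilde f)$ are immediate from the definitions, and that $\Phi$ and $\Psi$ are mutually inverse is tautological. The last item is $A/C$-equivariance, which is a direct check: $\Phi(a \cdot f)(\bar g) = (a \cdot f)(g) = f(ga) = \Phi(f)(\overline{ga}) = (\bar a \cdot \Phi(f))(\bar g)$. The only non-bookkeeping step in the whole argument is the verification that values of $f$ land in $M^{B \cap C}$, which is the one place where the normality hypothesis enters essentially.
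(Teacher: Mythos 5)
Your proposed map $\Phi(f)(\bar g) = f(g)$ is exactly the map $f \mapsto \tilde f := (gC \mapsto f(g))$ that the paper writes down and declares "easily verified" to be the required isomorphism; your argument simply supplies the verifications the paper omits, and the key point you highlight (normality of $C$ ensuring $f(g) \in M^{B\cap C}$ via $cg = g(g^{-1}cg)$) is indeed where the hypothesis enters. Same approach, correctly carried out.
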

\begin{proof}
The map $f\in(\mathrm{Ind}_{B}^{A}M)^{C}\mapsto\tilde{f}:=(gC\mapsto f(g))\in(\mathrm{Ind}_{B/B\cap C}^{A/C}M{}^{C\cap B})$
is easily verified to be the required isomorphism. 
\end{proof}
\begin{lem}
\label{lem:sh-1}For $r\geq0$, Shapiro's lemma induces an isomorphism
$\mathrm{H^{1}}(G/G^{r},(\mathrm{Ind}_{H}^{G}N)^{G^{r}})\cong\mathrm{H^{1}}(H/H^{\psi_{F/K}(r)},N^{H^{\psi_{F/K}(r)}}).$
\end{lem}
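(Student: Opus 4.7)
The plan is to reduce the claimed isomorphism to a direct application of Shapiro's lemma for the finite quotient groups $G/G^{r}$ and $H/H^{\psi_{F/K}(r)}$, using Lemmas \ref{lem:psi} and \ref{lem:ABC} to identify the relevant coefficient module.

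First I would apply Lemma \ref{lem:ABC} with $A=G$, $B=H$, $C=G^{r}$, and $M=N$. Since $G^{r}$ is normal in $G$ (upper-numbering ramification groups are normal, being inverse limits of normal subgroups of finite Galois quotients), the hypotheses are met, and we obtain a canonical isomorphism of $G/G^{r}$-modules
\[
(\mathrm{Ind}_{H}^{G}N)^{G^{r}}\;\cong\;\mathrm{Ind}_{H/(H\cap G^{r})}^{G/G^{r}}\,N^{H\cap G^{r}}.
\]
Next I would invoke Lemma \ref{lem:psi} to rewrite $H\cap G^{r}$ as $H^{\psi_{F/K}(r)}$, giving
\[
(\mathrm{Ind}_{H}^{G}N)^{G^{r}}\;\cong\;\mathrm{Ind}_{H/H^{\psi_{F/K}(r)}}^{G/G^{r}}\,N^{H^{\psi_{F/K}(r)}}.
\]

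Having identified the coefficient module, the final step is to apply Shapiro's lemma to the finite groups $G/G^{r}$ and its subgroup $H/H^{\psi_{F/K}(r)}$ (which injects into $G/G^{r}$ because $H\cap G^{r}=H^{\psi_{F/K}(r)}$) acting on the $H/H^{\psi_{F/K}(r)}$-module $N^{H^{\psi_{F/K}(r)}}$. This yields
\[
\mathrm{H}^{1}\!\left(G/G^{r},(\mathrm{Ind}_{H}^{G}N)^{G^{r}}\right)\;\cong\;\mathrm{H}^{1}\!\left(H/H^{\psi_{F/K}(r)},N^{H^{\psi_{F/K}(r)}}\right),
\]
which is the desired isomorphism.

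I do not expect any serious obstacle: the argument is essentially a bookkeeping chain combining Lemma \ref{lem:psi} and Lemma \ref{lem:ABC} with the classical Shapiro isomorphism. The only point that deserves a line of verification is that the Shapiro map of the statement—defined at the level of $G$ and $H$ as $\gamma\mapsto(h\mapsto\gamma(h)(1))$—is compatible with the Shapiro map of $G/G^{r}$ and $H/H^{\psi_{F/K}(r)}$ via the inflation identifications; this is immediate from the formula once one notes that a cocycle killed by $G^{r}$ takes values in the $G^{r}$-invariants and that evaluation at $1$ intertwines the two constructions.
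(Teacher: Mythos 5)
Your proof follows exactly the same chain as the paper's: apply Lemma \ref{lem:ABC} to identify $(\mathrm{Ind}_{H}^{G}N)^{G^{r}}$ with an induced module over $G/G^{r}$, use Lemma \ref{lem:psi} to replace $H\cap G^{r}$ by $H^{\psi_{F/K}(r)}$, and finish with Shapiro's lemma for the finite quotients. The only difference is cosmetic — you spell out the normality of $G^{r}$ and the compatibility of the two Shapiro maps with inflation, which the paper leaves implicit.
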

\begin{proof}
We have 
\begin{eqnarray*}
\mathrm{H^{1}}(G/G^{r},(\mathrm{Ind}_{H}^{G}N)^{G^{r}}) & \cong & \mathrm{H^{1}}(G/G^{r},\mathrm{Ind}_{H/G^{r}\cap H}^{G/G^{r}}N{}^{G^{r}\cap H})\\
 & \cong & \mathrm{H^{1}}(G/G^{r},\mathrm{Ind}_{H/H^{\psi_{F/K}(r)}}^{G/G^{r}}N^{H^{\psi_{F/K}(r)}})\\
 & \cong & \mathrm{H^{1}}(H/H^{\psi_{F/K}(r)},N^{H^{\psi_{F/K}(r)}}).
\end{eqnarray*}

The first isomorphism follows from Lemma \ref{lem:ABC}, second from
Lemma \ref{lem:psi} and the last from Shapiro's lemma. 
\end{proof}
Write $\mathrm{H^{1}}(G,\mathrm{Ind}_{H}^{G}N)_{\mathrm{adm}}=\bigcup_{r\geq0}\mathrm{H^{1}}(G/G^{r},(\mathrm{Ind}_{H}^{G}N)^{G^{r}}).$ 
\begin{cor}
\label{cor:shapiro}
If $\lambda\in\mathrm{H^{1}}(G,\mathrm{Ind}_{H}^{G}N)_{\mathrm{adm}}$
, then $\mathrm{dep}_{G}(\lambda)=\varphi_{F/K}(\mathrm{dep}_{H}(\mathrm{Sh}(\lambda))$. 
\end{cor}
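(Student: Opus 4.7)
The plan is to translate the defining condition for $\mathrm{dep}_{G}(\lambda)$ into a statement about the admissible filtration $\mathrm{H}^{1}(G/G^{r},(\mathrm{Ind}_{H}^{G}N)^{G^{r}})$, apply Lemma \ref{lem:sh-1} termwise, and then read off the depth of $\mathrm{Sh}(\lambda)$.

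First, I would record the elementary reformulation that for $\lambda\in\mathrm{H}^{1}(G,\mathrm{Ind}_{H}^{G}N)_{\mathrm{adm}}$, the condition $G^{s}\subset\ker(\lambda)$ is equivalent to $\lambda$ lying in the image of the inflation $\mathrm{H}^{1}(G/G^{s},(\mathrm{Ind}_{H}^{G}N)^{G^{s}})\hookrightarrow\mathrm{H}^{1}(G,\mathrm{Ind}_{H}^{G}N)$. Consequently
\[
\mathrm{dep}_{G}(\lambda)=\inf\bigl\{r\geq0\,\bigm|\,\lambda\in\mathrm{H}^{1}(G/G^{s},(\mathrm{Ind}_{H}^{G}N)^{G^{s}})\text{ for all }s>r\bigr\},
\]
and the analogous identity holds for $\mathrm{dep}_{H}(\mathrm{Sh}(\lambda))$ with $H^{u}$ in place of $G^{s}$. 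This is just unpacking the definition of depth of a cocycle using the fact that $\lambda$ is admissible.

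Next, by Lemma \ref{lem:sh-1}, Shapiro's isomorphism identifies $\mathrm{H}^{1}(G/G^{s},(\mathrm{Ind}_{H}^{G}N)^{G^{s}})$ with $\mathrm{H}^{1}(H/H^{\psi_{F/K}(s)},N^{H^{\psi_{F/K}(s)}})$ compatibly with the inflation maps. Hence, under the correspondence $u=\psi_{F/K}(s)$, we obtain the equivalence
\[
G^{s}\subset\ker(\lambda)\iff H^{\psi_{F/K}(s)}\subset\ker(\mathrm{Sh}(\lambda)).
\]

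Finally, since $\psi_{F/K}$ is a continuous, strictly increasing homeomorphism of $[0,\infty)$ onto itself fixing $0$ (property (c) applied to $\varphi_{F/K}$), the set $\{\psi_{F/K}(s)\mid s>r\}$ coincides with $\{u\mid u>\psi_{F/K}(r)\}$. Substituting into the reformulation of depth from the first step yields
\[
\mathrm{dep}_{H}(\mathrm{Sh}(\lambda))=\psi_{F/K}(\mathrm{dep}_{G}(\lambda)),
\]
and applying $\varphi_{F/K}=\psi_{F/K}^{-1}$ gives the corollary. I do not anticipate a genuine obstacle; the only point that needs a little care is that the Shapiro isomorphism is compatible with inflation at each level $r$, which is exactly the content of Lemma \ref{lem:sh-1}, together with the straightforward bijectivity of $\psi_{F/K}$ ensuring that the infima on the two sides correspond.
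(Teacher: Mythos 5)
Your proof is correct and takes essentially the same route as the paper: both arguments hinge on Lemma \ref{lem:sh-1} to convert the condition $G^{s}\subset\ker(\lambda)$ into $H^{\psi_{F/K}(s)}\subset\ker(\mathrm{Sh}(\lambda))$, and then use the bijectivity of $\psi_{F/K}$ to match the infima. You merely spell out the inflation reformulation of ``$G^{s}\subset\ker(\lambda)$'' and the homeomorphism step a bit more explicitly than the paper does.
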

\begin{proof}

Let $\mathrm{dep}_{G}(\lambda)= r$. Then $G^s \subset \mathrm{ker}(\lambda)$ if $s > r$. By Lemma \ref{lem:sh-1}, this implies  $H^{\psi_{F/K}(s)} \subset \mathrm{ker}(\mathrm{Sh}(\lambda))$ if $s > r$. Therefore $\mathrm{dep}_{H}(\mathrm{Sh}(\lambda))\leq \psi_{F/K}(\mathrm{dep}_{G}(\lambda))$. The argument is reversible showing that $\mathrm{dep}_{H}(\mathrm{Sh}(\lambda))\geq \psi_{F/K}(\mathrm{dep}_{G}(\lambda))$. Therefore $\mathrm{dep}_{H}(\mathrm{Sh}(\lambda))= \psi_{F/K}(\mathrm{dep}_{G}(\lambda))$.


\end{proof}

\section{\label{sec:LLC}Langlands correspondence for tori}

We review here the statement of local Langlands correspondence for
tori as stated and proved in \cite{Yu}.

\subsection{Special case \label{induced torus}}

Let $T=\mathrm{R}_{K^{\prime}/K}\mathbb{G}_{m}$ where $K^{\prime}$
is a finite separable extension of $K$ and $R_{K^{\prime}/K}$ denotes
the Weil restriction. Then $T(K)=K^{\prime\times}$ and the group
of characters $X^{*}(T)$ is canonically a free $\mathbb{Z}$-module
with basis $W_{K}/W_{K^{\prime}}$ where $W_{K}$ (resp. $W_{K^{\prime}}$)
denotes the Weil group of $K$ (resp. $K^{\prime}$). From this, it
follows that the complex dual $\hat{T}$ of $T$ is canonically isomorphic
to $\mathrm{Ind}_{W_{K^{\prime}}}^{W_{K}}\mathbb{C}^{\times}$. We
get, 
\begin{eqnarray}
\mathrm{Hom}(T(K),\mathbb{C^{\times}}) & \cong & \mathrm{Hom}(K^{\prime\times},\mathbb{C}^{\times})\nonumber \\
 & \cong & \mathrm{Hom}(W_{K^{\prime}},\mathbb{C}^{\times})\label{class field}\\
 & \cong & \mathrm{H}^{1}(W_{K^{\prime}},\mathbb{C}^{\times})\nonumber \\
 & \cong & \mathrm{H}^{1}(W_{K},\mathrm{Ind}_{W_{K^{\prime}}}^{W_{K}}\mathbb{C}^{\times})\label{shapiro}\\
 & \cong & \mathrm{H}^{1}(W_{K},\hat{T)}.\nonumber 
\end{eqnarray}
 The isomorphism \ref{class field} follows by class field theory
and the isomorphism \ref{shapiro} by Shapiro's lemma.

\subsection{The LLC for tori in general }
\begin{thm*}
\cite{L97}There is a unique family of homomorphisms 
\[
\lambda_{T}:\mathrm{Hom}(T(K),\mathbb{C}^{\times})\rightarrow\mathrm{H}^{1}(W_{K},\hat{T})
\]
with the following properties:

\begin{enumerate}
\item $\lambda_{T}$ is additive functorial in $T$, i.e., it is a morphism
between two additive functors from the category of tori over $K$
to the category of abelian groups;
\item For $T=R_{K^{\prime}/K}\mathbb{G}_{m}$, where $K^{\prime}/K$ is
a finite separable extension, $\lambda_{T}$ is the isomorphism described
in Section \ref{induced torus}. 
\end{enumerate}
\end{thm*}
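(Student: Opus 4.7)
The plan is to reduce the definition of $\lambda_{T}$ for an arbitrary $K$-torus $T$ to the case of quasi-trivial tori treated in Section \ref{induced torus}, by means of a short resolution of $T$ and the functorial behaviour of the construction.

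Every $K$-torus $T$ embeds into a quasi-trivial $K$-torus: pick a surjection $P \twoheadrightarrow X^{*}(T)$ from a permutation $\mathrm{Gal}(\bar K/K)$-lattice and let $Q$ be the quasi-trivial torus with $X^{*}(Q) = P$; setting $T' = Q/T$ gives an exact sequence $1 \to T \to Q \to T' \to 1$. Taking $K$-points and using Hilbert 90 in the form $H^{1}(K,Q) = 0$ produces the four-term exact sequence
\[
1 \to T(K) \to Q(K) \to T'(K) \to H^{1}(K, T) \to 0,
\]
while dualising yields in $W_{K}$-cohomology the exact piece
\[
H^{1}(W_{K}, \widehat{T'}) \to H^{1}(W_{K}, \widehat{Q}) \to H^{1}(W_{K}, \widehat{T}) \to H^{2}(W_{K}, \widehat{T'}).
\]

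For uniqueness, condition (2) pins down $\lambda_{Q}$ as the isomorphism of Section \ref{induced torus}; iterating the resolution (applied first to $T'$, and so on) together with additivity and functoriality from (1) forces $\lambda_{T}$ to be the unique morphism making the induced diagram between the two parallel long exact sequences commute. For existence, define $\lambda_{T}(\chi)$ by extending $\chi$ to a continuous character $\tilde\chi$ of $Q(K)$---possible since $T(K) \hookrightarrow Q(K)$ is a closed embedding of locally compact abelian groups and $\mathbb{C}^{\times}$ is an injective cogenerator in the LCA category---and taking the image of $\lambda_{Q}(\tilde\chi) \in H^{1}(W_{K}, \widehat{Q})$ under $H^{1}(W_{K}, \widehat{Q}) \to H^{1}(W_{K}, \widehat{T})$. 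Two extensions differ by a character trivial on $T(K)$, which factors through the image of $Q(K)$ in $T'(K)$; passing to a quasi-trivial cover of $T'$ and invoking functoriality of the Section \ref{induced torus} isomorphism shows that its image under $\lambda_{Q}$ lies in the image of $H^{1}(W_{K}, \widehat{T'}) \to H^{1}(W_{K}, \widehat{Q})$ and therefore dies in $H^{1}(W_{K}, \widehat{T})$. The remaining piece, coming from $\mathrm{Hom}(H^{1}(K,T), \mathbb{C}^{\times})$, is handled via Tate--Nakayama duality for tori.

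The main obstacle, and the technical heart of the proof, is \emph{independence of the chosen resolution}. Given two resolutions of $T$, one dominates both by a common refinement built from fibre products in the category of $K$-tori and invokes the uniqueness argument on the refined diagram to match the two candidate values of $\lambda_{T}$. Once this is established, additivity in $T$ and functoriality of $T \mapsto \lambda_{T}$ reduce, by the same resolution device, to their immediate verifications on quasi-trivial tori, which follow from the compatibility of class field theory with Weil restriction together with Shapiro's lemma, as in Section \ref{induced torus}.
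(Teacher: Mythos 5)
The theorem is not proved in the paper: it is quoted as a known result of Langlands \cite{L97}, and the paper explicitly defers to Yu \cite{Yu} for statement and proof, reviewing it only as background for the depth computations that follow. There is therefore no in-paper proof to compare your attempt against, and my comments address the sketch on its own terms.

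As a freestanding attempt, the sketch has the right overall shape --- reduce to the quasi-trivial case via a short exact sequence $1 \to T \to Q \to T' \to 1$ and invoke functoriality and additivity --- but the two load-bearing steps are not actually carried out. The well-definedness argument is circular: to show that a character $\psi$ of $Q(K)$ trivial on $T(K)$ has $\lambda_{Q}(\psi)$ in the image of $H^{1}(W_{K},\hat{T'}) \to H^{1}(W_{K},\hat{Q})$, you ``pass to a quasi-trivial cover of $T'$ and invoke functoriality,'' which amounts to using the map $\lambda_{T'}$ --- for a torus $T'$ in general no simpler than $T$ --- before it has been constructed. A correct proof needs an actual induction (for instance on the length of a permutation-module resolution of the character lattice), and setting this up properly is where the real content of Langlands' and Yu's arguments lies. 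Second, ``independence of the chosen resolution,'' which you rightly identify as the technical heart, is merely asserted via ``common refinement by fibre products'' with no argument supplied; that is precisely the step that would have to be written out, and it interacts with the first gap. Finally, the closing appeal to Tate--Nakayama duality is beside the point: the theorem claims only that $\lambda_{T}$ is a homomorphism (an isomorphism only when $T$ is an induced torus), so there is no ``remaining piece'' coming from $\mathrm{Hom}(H^{1}(K,T),\mathbb{C}^{\times})$ that needs to be addressed to prove the statement as given.
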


\section{Depth change for tori under LLC}
\label{sec:depthchangeLLC}
We keep the notations as in Section \ref{sec:LLC}. Let $M$ be a
local field. Recall that $M^{\times}$ admits a filtration $\{M_{r}^{\times}\}_{r\geq0}$
where $M_{0}^{\times}$ is the units of the ring of integers and for
$r>0$, $M_{r}^{\times}:=\{x\in M\mid\mathrm{ord}_{M}(x-1)\geq r\}$.
Here $\mathrm{ord}_{M}$ is the valuation of $M$ normalised so that
$\mathrm{ord}_{M}(M^{\times})=\mathbb{Z}$. Under local class field
theory isomorphism
\[
M_{r}^{\times}\cong(W_{M}^{r})^{\mathrm{ab}}.
\]

We recall that $T(K)$ carries a Moy-Prasad filtration $\{T(K)_{r}\}_{r\geq0}$
\cite{MP96}. The depth $\mathrm{dep}_{T}(\chi)$ of a character $\chi:T(K)\rightarrow\mathbb{C}^{\times}$
is defined to be 
\[
\mathrm{inf}\{r\geq0\mid T(K)_{s}\subset\mathrm{ker}(\chi)\text{ for }s>r\}.
\]

The group $T(K)_{0}$ is called the Iwahori subgroup of $T(K)$. It
is a subgroup of finite index in the maximal compact subgroup of $T(K)$.
When $T=\mathrm{R}_{K^{\prime}/K}\mathbb{G}_{\mathrm{m}}$, then for
$r>0$, 
\begin{eqnarray}
T(K)_{r} &=& \{x\in T(K)=K^{\prime\times}\mid\mathrm{ord}_{K}(x-1)\geq r\} \label{val_filt}\\
         &=& \{x\in K^{\prime\times}\mid\mathrm{ord}_{K^{\prime}}(x-1)\geq er\}\label{norm_val_ext}\\
         &=& K_{er}^{\prime\times}. \nonumber
\end{eqnarray}
Here $\mathrm{ord}_{K}$ is the valuation
on $K^{\prime}$ normalised so that $\mathrm{ord}_{K}(K^{\times})=\mathbb{Z}$
and $e$ is the ramification index of $K^{\prime}/K$. The equality \ref{val_filt} follows from \cite[Sec. 4.2]{Yu03} and the equality \ref{norm_val_ext} follows from the fact that $\mathrm{ord}_{K^{\prime}}(\alpha) = e \cdot \mathrm{ord}_{K}(\alpha)$ for all $\alpha \in K^{\times}.$ 
\begin{thm}
\label{thm:main}Let $T=\mathrm{R}_{K^{\prime}/K}\mathbb{G}_{\mathrm{m}}$,
where $K^{\prime}/K$ is a finite separable extension of local fields
of ramification index $e$. Then for $r\geq0$, the local Langlands
correspondence for tori induces an isomorphism:
\[
\mathrm{Hom}(T(K)/T(K)_{r},\mathbb{C}^{\times})\cong\mathrm{H}^{1}(W_{K}/W_{K}^{\varphi_{K^{\prime}/K}(er)},\hat{T}^{W_{K}^{\varphi_{K^{\prime}/K}(er)}}).
\]
\end{thm}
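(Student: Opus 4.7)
The plan is to track the chain of isomorphisms of Section \ref{induced torus} and verify that at each step the depth-$\leq r$ subgroup on the left matches the subgroup on the right cut out by $\varphi_{K'/K}(er)$. First, using $T(K)_r = K^{\prime\times}_{er}$ established just above the theorem, the left-hand side becomes $\mathrm{Hom}(K^{\prime\times}/K^{\prime\times}_{er},\mathbb{C}^{\times})$. Local class field theory for $K'$ sends the unit filtration $\{K^{\prime\times}_{s}\}$ isomorphically onto the upper-numbering filtration $\{(W_{K'}^{s})^{\mathrm{ab}}\}$, so this group identifies with $\mathrm{H}^{1}(W_{K'}/W_{K'}^{er},\mathbb{C}^{\times})$.

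Next I would apply Shapiro's lemma. Under the identification $\hat T \cong \mathrm{Ind}_{W_{K'}}^{W_K}\mathbb{C}^{\times}$ and the induced map $\mathrm{Sh}: \mathrm{H}^{1}(W_K, \hat T) \to \mathrm{H}^{1}(W_{K'}, \mathbb{C}^{\times})$, Corollary \ref{cor:shapiro} yields $\mathrm{dep}_{W_K}(\lambda) = \varphi_{K'/K}(\mathrm{dep}_{W_{K'}}(\mathrm{Sh}(\lambda)))$. Hence $\mathrm{Sh}(\lambda)$ has depth $\leq er$ if and only if $\lambda$ has depth $\leq \varphi_{K'/K}(er)$. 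By inflation-restriction applied to the normal subgroup $W_K^{\varphi_{K'/K}(er)}$, the set of such $\lambda$ equals the image of the injection $\mathrm{H}^{1}(W_K/W_K^{\varphi_{K'/K}(er)}, \hat T^{W_K^{\varphi_{K'/K}(er)}}) \hookrightarrow \mathrm{H}^{1}(W_K, \hat T)$. Composing these identifications gives the theorem.

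The only non-routine step is that Corollary \ref{cor:shapiro} and Lemma \ref{lem:sh-1} were established for a finite Galois extension, whereas here the ambient group is the Weil group $W_K$. I plan to handle this by exhaustion: any continuous character of $K^{\prime\times}$ has finite depth, so its Shapiro counterpart on $W_K$ is inflated from some $\mathrm{Gal}(L/K)$ with $L/K$ finite Galois and $L \supseteq K'$; applying Corollary \ref{cor:shapiro} at this finite level and passing to the direct limit over such $L$ yields the Weil-group statement, using that $W_K^{v}$ is by construction the inverse limit of $\mathrm{Gal}(L/K)^{v}$. The boundary case $r = 0$ is subsumed once one observes $T(K)_0 = \mathcal{O}_{K'}^{\times} = K^{\prime\times}_0$ and $\varphi_{K'/K}(0) = 0$.
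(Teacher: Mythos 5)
Your chain of isomorphisms is the same as the paper's on the outside, but your middle step takes a detour that introduces a real gap. The paper passes from $\mathrm{H}^1(W_{K'}/W_{K'}^{er},\mathbb{C}^\times)$ to $\mathrm{H}^1\bigl(W_K/W_K^{\varphi_{K'/K}(er)},\hat T^{W_K^{\varphi_{K'/K}(er)}}\bigr)$ by applying Lemma \ref{lem:sh-1} directly, which \emph{is} the Shapiro isomorphism at a fixed filtration level. You instead invoke Corollary \ref{cor:shapiro} (a statement about depth, i.e.\ about the infimum) and then inflation--restriction, arguing that ``depth $\leq er$'' on the $W_{K'}$ side corresponds to ``depth $\leq \varphi_{K'/K}(er)$'' on the $W_K$ side, hence to lying in the image of $\mathrm{H}^1(W_K/W_K^{\varphi_{K'/K}(er)},\hat T^{W_K^{\varphi_{K'/K}(er)}})$.

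The gap is the translation between ``$\mathrm{dep}\leq v$'' and ``factors through $W/W^v$.'' By definition, $\mathrm{dep}(\phi)\leq v$ means $W^s\subseteq\ker\phi$ for all $s>v$, i.e.\ $\bigcup_{s>v}W^s\subseteq\ker\phi$. At a jump $v$ of the upper-numbering filtration (and jumps of $W_{K'}^{\mathrm{ab}}$ occur precisely at integers, which $er$ typically is), one has $\bigcup_{s>v}W^s\subsetneq W^v$: for instance $\bigcup_{s>n}K'^{\times}_s = 1+\mathfrak p_{K'}^{n+1}\subsetneq 1+\mathfrak p_{K'}^n = K'^{\times}_n$. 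So a character of depth exactly $er$ need \emph{not} be trivial on $W_{K'}^{er}$, and your equivalence ``$\mathrm{Sh}(\lambda)$ has depth $\leq er$ iff $\mathrm{Sh}(\lambda)\in\mathrm{Hom}(W_{K'}/W_{K'}^{er},\mathbb{C}^\times)$'' is false in the ``if'' direction. The fix is simply to not route through depth at all: Lemma \ref{lem:sh-1} gives the quotient-level isomorphism outright, which is exactly what the theorem asserts, and this is what the paper uses. (Indeed Corollary \ref{cor:shapiro} is itself a consequence of Lemma \ref{lem:sh-1}; you are deriving a weaker statement and then trying to climb back up.)

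Two smaller remarks. Your observation that Lemma \ref{lem:sh-1} and Corollary \ref{cor:shapiro} are stated for Galois groups whereas the theorem concerns Weil groups, and your exhaustion argument over finite Galois $L\supseteq K'$, is a genuine addition: the paper leaves this transition implicit. On the other hand your dispatch of $r=0$ relies on the identity $T(K)_0=\mathcal{O}_{K'}^\times=K'^{\times}_0$, which is not among the facts the paper establishes; the paper instead cites \cite[Theorem 7]{MM2015} for $r=0$ rather than trying to push the $r>0$ chain to the boundary, and you should either supply a justification for $T(K)_0=\mathcal{O}_{K'}^\times$ (it is true for this induced torus, e.g.\ via the Kottwitz homomorphism) or fall back on the citation.
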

\begin{proof}
The case $r=0$ is a special case of \cite[Theorem 7]{MM2015}.
For $r>0$, this follows by 
\begin{eqnarray}
\mathrm{Hom}(T(K)/T(K)_{r},\mathbb{C^{\times}}) & \cong & \mathrm{Hom}(K^{\prime\times}/K_{er}^{\prime\times},\mathbb{C}^{\times})\nonumber \\
 & \cong & \mathrm{Hom}(W_{K^{\prime}}/W_{K^{\prime}}^{er},\mathbb{C}^{\times})\label{class field-1}\\
 & \cong & \mathrm{H}^{1}(W_{K^{\prime}}/W_{K^{\prime}}^{er},\mathbb{C}^{\times})\nonumber \\
 & \cong & \mathrm{H}^{1}(W_{K}/W_{K}^{\varphi_{K^{\prime}/K}(er)},(\mathrm{Ind}_{W_{K^{\prime}}}^{W_{K}}\mathbb{C}^{\times})^{W_{K}^{\varphi_{K^{\prime}/K}(er)}})\label{shapiro-1}\\
 & \cong & \mathrm{H}^{1}(W_{K}/W_{K}^{\varphi_{K^{\prime}/K}(er)},(\hat{T)}^{W_{K}^{\varphi_{K^{\prime}/K}(er)}}).\nonumber 
\end{eqnarray}

Here, the isomorphism \ref{shapiro-1} follows from Lemma \ref{lem:sh-1}. 
\end{proof}
\begin{cor}
\label{cor:T}For $T$ as in Theorem \ref{thm:main}
\[
\varphi_{K^{\prime}/K}(e\cdot\mathrm{dep}_{T}(\chi))=\mathrm{dep}_{W_{K}}(\lambda_{T}(\chi)).
\]
\end{cor}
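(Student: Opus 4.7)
The plan is to read off the corollary directly from Theorem \ref{thm:main}, by translating the Hom/\textnormal{H}$^1$ identification into an equivalence of kernels.

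First I would set up the translation. By definition of the Moy–Prasad depth and of the depth of a cocycle, we have
\[
  \mathrm{dep}_{T}(\chi)=\inf\{r\geq 0 \mid T(K)_{s}\subset \ker(\chi)\ \forall s>r\},
\]
\[
  \mathrm{dep}_{W_{K}}(\lambda_{T}(\chi))=\inf\{r\geq 0 \mid W_{K}^{s}\subset \ker(\lambda_{T}(\chi))\ \forall s>r\}.
\]
So the corollary reduces to a clean equivalence of two conditions: $T(K)_{s}\subset \ker(\chi)$ if and only if $W_{K}^{\varphi_{K'/K}(es)}\subset \ker(\lambda_{T}(\chi))$.

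For this equivalence, I would invoke Theorem \ref{thm:main} with $r=s$. The left–hand side of that theorem, $\mathrm{Hom}(T(K)/T(K)_{s},\mathbb{C}^{\times})$, is precisely the subgroup of characters $\chi$ with $T(K)_{s}\subset \ker(\chi)$. The right–hand side, $\mathrm{H}^{1}(W_{K}/W_{K}^{\varphi_{K'/K}(es)},\hat T^{W_{K}^{\varphi_{K'/K}(es)}})$, consists of cohomology classes represented by cocycles that are inflated from the quotient $W_{K}/W_{K}^{\varphi_{K'/K}(es)}$, i.e.\ those whose kernel contains $W_{K}^{\varphi_{K'/K}(es)}$. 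Since the isomorphism of Theorem \ref{thm:main} is the one induced by $\lambda_{T}$, the two conditions match.

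Finally I would take the infimum. Since $\varphi_{K'/K}$ is a continuous strictly increasing homeomorphism of $[0,\infty)$ onto itself (properties (a)–(c) in Section~2), the substitution $t=\varphi_{K'/K}(es)$ sets up a bijection between $\{s>r\}$ and $\{t>\varphi_{K'/K}(er)\}$. Therefore
\[
  T(K)_{s}\subset \ker(\chi)\ \forall s>r \ \iff\ W_{K}^{t}\subset \ker(\lambda_{T}(\chi))\ \forall t>\varphi_{K'/K}(er).
\]
Taking infima over $r$ and using that $\varphi_{K'/K}$ is a homeomorphism (hence commutes with infima of bounded-below sets of reals) yields $\varphi_{K'/K}(e\cdot\mathrm{dep}_{T}(\chi))=\mathrm{dep}_{W_{K}}(\lambda_{T}(\chi))$.

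I do not expect a real obstacle: the main content is already packaged in Theorem \ref{thm:main}, and the remaining work is the bookkeeping around infima and the monotone bijection $\varphi_{K'/K}$. The only minor point to be careful about is to argue with strict inequalities $s>r$ throughout so that jumps in the Moy–Prasad and upper–numbering filtrations do not cause any off–by–one at the infimum.
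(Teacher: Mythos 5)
Your proposal is correct and follows essentially the same route as the paper: the paper's proof of Corollary \ref{cor:T} simply defers to the argument in Corollary \ref{cor:shapiro}, which is exactly the kernel-containment translation of Theorem \ref{thm:main} combined with the monotone bijectivity of $\psi_{F/K}$ (equivalently $\varphi_{K'/K}$) that you spell out. Your version just makes the bookkeeping around the infima a bit more explicit, which is fine.
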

\begin{proof}
This follows from an argument analogous to the argument in the proof of the Corollary \ref{cor:shapiro}.
\end{proof}

\begin{rem}
\label{remark}
The slope of the map $r\mapsto\varphi_{K^{\prime}/K}(er)$ at a differentiable
point $r$ is $\frac{e}{(G_{0}:G_{r})}\geq1$. Thus, when $K^{\prime}/K$
is a wildly ramified extension, $\varphi_{K^{\prime}/K}(er)>r$ and
consequently $\mathrm{dep}_{T}(\chi)<\mathrm{dep}_{W_{K}}(\lambda_{T}(\chi))$.

When $K^{\prime}/K$ is a tamely ramified extension, $\varphi_{K^{\prime}/K}(r)=\frac{r}{e}$.
Therefore in this case, Corollary \ref{cor:T} simplifies to, 
\[
\mathrm{dep}_{T}(\chi)=\mathrm{dep}_{W_{K}}(\lambda_{T}(\chi)).
\]
 This is a special case of Depth-preservation Theorem of Yu for tamely
ramified tori \cite[Sec. 7.10]{Yu}. 
\end{rem}

\subsection{\label{subsec:tame induced}Case of a tamely induced tori}

Recall that a $K$-torus is called \textit{induced} if it is of the
form $\Pi_{i=1}^{k}R_{L_{i}/K}\mathbb{G}_{\mathrm{m}}$, where $L_{i}$
are finite separable extensions of $K$. A $K$-torus $\mathcal{T}$
is called \textit{tamely induced} if $\mathcal{T}\otimes_{K}K_{t}$
is an induced torus for some tamely ramified extension $K_{t}$ of
$K$. In this section, we compare depths under LLC for such tori following
the proof in \cite[Sec. 7.10]{Yu}.

Let $T$ be a tamely induced $K$-torus. Then there exists an induced
torus $T^{\prime}=\prod_{i=1}^{n}\mathrm{R}_{K_{i}^{\prime}/K}\mathbb{G}_{\mathrm{m}}$
such that $T^{\prime}\twoheadrightarrow T$ and $C_0:=\mathrm{ker}(T^{\prime}\rightarrow T)$
is connected. Further $T^{\prime}(K)_{r}\twoheadrightarrow T(K)_{r}$
$\forall r>0$ (see proof in \cite[Lemma 4.7.4]{Yu03}). Let $\chi\in\mathrm{Hom}(T(K),\mathbb{C}^{\times})$
and let $\chi^{\prime}$ denote its lift to $T^{\prime}(K)$. Then
\begin{equation}
\mathrm{dep}_{T}(\chi)=\mathrm{dep}_{T^{\prime}}(\chi^{\prime})=\mathrm{sup}\{\mathrm{dep}_{T_{i}^{\prime}}(\chi_{i}^{\prime})\mid1\leq i\leq n\}.
\end{equation}
 Here $T_{i}^{\prime}$ denotes $\mathrm{R}_{K_{i}^{\prime}/K}\mathbb{G}_{\mathrm{m}}$
and $\chi_{i}^\prime=\chi^\prime|_{T_{i}^{\prime}}$. By functoriality, $\lambda_{T^{\prime}}(\chi^{\prime})$
is the image of $\lambda_{T}(\chi)$ under $\mathrm{H}^{1}(W_{K},\hat{T})\rightarrow\mathrm{H}^{1}(W_{K},\hat{T^{\prime}})$
and therefore $\mathrm{dep}_{W_{K}}(\lambda_{T}(\chi))=\mathrm{dep}_{W_{K}}(\lambda_{T^{\prime}}(\chi^{\prime}))$.
But 
\begin{eqnarray*}
\mathrm{dep}_{W_{K}}(\lambda_{T^{\prime}}(\chi^{\prime})) & = & \mathrm{sup}\{\mathrm{dep}_{W_{K}}(\lambda_{T_{i}^{\prime}}(\chi_{i}^{\prime}))\mid1\leq i\leq n\}\\
 & = & \mathrm{sup}\{\varphi_{K_{i}^{\prime}/K_{i}}(e_{i}\cdot\mathrm{dep}_{T_{i}^{\prime}}(\chi_{i}^{\prime}))\mid1\leq i\leq n\}.\\
 & \geq & \mathrm{sup}\{\mathrm{dep}_{T_{i}^{\prime}}(\chi_{i}^{\prime})\mid1\leq i\leq n\}.
\end{eqnarray*}

Here $e_{i}$ denotes the ramification index of $K_{i}^{\prime}/K$.
Thus 
\begin{equation}
\label{eq:ineqality}
\mathrm{dep}_{W_{K}}(\lambda_{T}(\chi))\geq\mathrm{dep}_{T}(\chi).
\end{equation}
Now assume $T$ is wildly ramified. We will now produce a character of $T(K)$ for which  the  inequality (\ref{eq:ineqality}) is strict. We can assume without loss of generality that $T_0:=\mathrm{R}_{K_{1}^{\prime}/K}\mathbb{G}_\mathrm{m}$ is wildly ramified. Let $\chi_{0}^{\prime}$ be a positive depth character of $T_0(K)$ which is trivial on $C_0\cap T_0(K)$. Extend $\chi_{0}^\prime$ trivially to a character $\chi^\prime$ of $T^\prime(K)$. Then since $\mathbb{C}^\times$ is divisible, the character $\chi^\prime$ lifts to a character $\chi$ of $T(K)$. By Remark \ref{remark}, 
$\mathrm{dep}_{T_0}(\chi_{0}^\prime)<\mathrm{dep}_{W_{K}}(\lambda_{T_0}(\chi_{0}^\prime))$. Since $\mathrm{dep}_{T}(\chi)=\mathrm{dep}_{T_0}(\chi_{0}^\prime)$ and $\mathrm{dep}_{W_{K}}(\lambda_{T}(\chi))=\mathrm{dep}_{W_{K}}(\lambda_{T_0}(\chi_{0}^\prime))$, it follows that  the inequality (\ref{eq:ineqality}) is strict for this choice of $\chi$. 

\section{\label{sec:Example}An Example}

Let $K=\mathbb{Q}_{p}$, $L=K(\zeta_{p^{n}})$, where $\zeta_{p^{n}}$
denotes a primitive $p^{n}$th root of unity, $n\geq1$. Then $L/K$
is a totally ramified extension of degree $(p-1)p^{n-1}.$ Consider
the intermediate extension $F=K(\zeta_{p})$ of $K$ of degree $p-1$
over $K$. Then $L/F$ is a wildly ramified extension. Write $G=\mathrm{Gal}(L/K)$
and $H=\mathrm{Gal}(L/F)$. 
\begin{lem}
For $1\leq r$ and $\varphi_{L/F}(r)=(p-1)\varphi_{L/K}(r)$.
\end{lem}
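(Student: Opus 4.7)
The plan is to derive the identity from the transitivity property of the Hasse--Herbrand function (property~(d) in the review) combined with an explicit computation of $\varphi_{F/K}$.

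First, I would observe that $F=K(\zeta_p)$ with $K=\mathbb{Q}_p$ is a totally and tamely ramified extension of degree $p-1$: the element $\zeta_p-1$ is a uniformizer of $F$ (its minimal polynomial, obtained from the $p$th cyclotomic polynomial via $x\mapsto x+1$, is Eisenstein), so $F/K$ is totally ramified, and since $\gcd(p-1,p)=1$ it is tame. Consequently $\mathrm{Gal}(F/K)_{0}=\mathrm{Gal}(F/K)$ and $\mathrm{Gal}(F/K)_{1}=1$, whence directly from the definition
\[
\varphi_{F/K}(u)=\frac{u}{p-1}\quad\text{for all }u\geq 0.
\]

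Next, since $G=\mathrm{Gal}(L/K)\cong(\mathbb{Z}/p^{n}\mathbb{Z})^{\times}$ is abelian, $H=\mathrm{Gal}(L/F)$ is normal in $G$, so property~(d) applies and yields $\varphi_{L/K}=\varphi_{F/K}\circ\varphi_{L/F}$. Since $\varphi_{L/F}$ is increasing with $\varphi_{L/F}(0)=0$, the value $\varphi_{L/F}(r)$ is nonnegative for every $r\geq 0$, so we may substitute the explicit formula for $\varphi_{F/K}$ to obtain
\[
\varphi_{L/K}(r)=\frac{\varphi_{L/F}(r)}{p-1},
\]
which rearranges to $\varphi_{L/F}(r)=(p-1)\varphi_{L/K}(r)$, valid for all $r\geq 0$ and in particular for $r\geq 1$.

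The argument is essentially formal once tameness of $F/K$ is in hand, so I do not foresee a genuine obstacle. The only small points to watch are: (i)~verifying that property~(d) is applicable, which is automatic because $G$ is abelian; and (ii)~ensuring that $\varphi_{L/F}(r)$ lies in the range where the explicit formula for $\varphi_{F/K}$ was established, so that the composition can legitimately be unfolded. Conceptually, the lemma simply records that all the wild ramification is concentrated in $L/F$, while the tame subextension $F/K$ contributes only the uniform rescaling factor $p-1$.
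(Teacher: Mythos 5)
Your proof is correct, and it takes a genuinely different route from the paper. The paper proves the identity by direct manipulation of the integral defining $\varphi_{L/K}$: it splits the integral at $t=1$, rewrites the integrand as $\frac{(G_t:H_t)}{(G_0:H_0)}\cdot\frac{1}{(H_0:H_t)}$, and then uses the facts $G_t=H_t$ for $t\geq 1$ and $(G_0:H_0)=p-1$ to relate it to $\varphi_{L/F}$. You instead invoke the composition formula $\varphi_{L/K}=\varphi_{F/K}\circ\varphi_{L/F}$ (property~(d), applicable because $G$ is abelian so $H$ is normal), compute $\varphi_{F/K}(u)=u/(p-1)$ directly from the tameness of $F/K$, and substitute. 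Your argument is shorter and more conceptual: it isolates the single structural fact (the bottom extension is tame, hence contributes only a linear rescaling by the degree) that the paper's integral manipulation re-derives by hand. It also yields the identity for all $r\geq 0$, not just $r\geq 1$; the paper's restriction to $r\geq 1$ is an artifact of where it chooses to split the integral, and you correctly observe that the statement holds more generally. The two approaches are logically equivalent in substance --- the paper's computation is in effect an inline proof of the special case of property~(d) that you cite --- but yours makes the mechanism transparent.
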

\begin{proof}
We first note that since we considering abelian extensions, the jumps
in filtration occur at integer values. We have for $r\geq1$,
\begin{eqnarray*}
\varphi_{L/K}(r) & = & \int_{0}^{r}\frac{dt}{(G_{0}:G_{t})}\\
 & = & \int_{0}^{1}\frac{dt}{(G_{0}:G_{t})}+\int_{1}^{r}\frac{dt}{(G_{0}:G_{t})}\\
 & = & \frac{1}{p-1}+\int_{1}^{r}\frac{dt}{(G_{0}:G_{t})}\\
 & = & \frac{1}{p-1}+\int_{1}^{r}\frac{(H_{0}:H_{t})}{(G_{0}:G_{t})}\frac{dt}{(H_{0}:H_{t})}\\
 & = & \frac{1}{p-1}+\int_{1}^{r}\frac{(G_{t}:H_{t})}{(G_{0}:H_{0})}\frac{dt}{(H_{0}:H_{t})}\\
 & = & \frac{1}{p-1}+\frac{1}{p-1}\int_{1}^{r}\frac{dt}{(H_{0}:H_{t})}.
\end{eqnarray*}
 The last equality holds because $G_{t}=H_{t}$ for $t\geq1$ and
$(G_{0}:H_{0})=p-1$. Thus 
\begin{eqnarray*}
\varphi_{L/K}(r) & = & \frac{1}{p-1}+\frac{1}{p-1}(\varphi_{L/F}(r)-\int_{0}^{1}\frac{1}{(H_{0}:H_{t})}dt)\\
 & = & \frac{1}{p-1}+\frac{1}{p-1}(\varphi_{L/F}(r)-1)\\
 & = & \frac{\varphi_{L/F}(r)}{(p-1)}.
\end{eqnarray*}
\end{proof}
Write $m=p^{n}$ and let $G(m)=(\mathbb{Z}/m\mathbb{Z})^{\times}$.
By \cite[Chap IV, Prop. 17]{Serre}, $G=G(m)$. Define
\[
G(m)^{s}:=\{a\in G(m)\mid a\equiv1\text{ mod }p^{s}\}.
\]

Then $G(m)^{s}=\mathrm{Gal}(L/K(\zeta_{p^{s}}))$. The ramification
groups $G_{u}$ of $G$ are \cite[Chap IV, Prop. 18]{Serre}:
\begin{eqnarray*}
\text{} &  & G_{0}=G\\
\text{if }1\leq u\leq p-1 &  & G_{u}=G(m)^{1}\\
\text{if }p\leq u\leq p^{2}-1 &  & G_{u}=G(m)^{2}\\
\vdots &  & \vdots\\
\text{if }p^{n-1}\leq u &  & G_{u}=1.
\end{eqnarray*}

We now calculate $\varphi_{L/F}.$ 
\begin{prop}
\label{lem:HH_Calc}The Hasse-Herbrand function of the wildly ramified
extension $L/F$ is given by 
\begin{equation}
\varphi_{L/F}(r)=\begin{cases}
k(p-1)+\frac{r-p^{k}+1}{p^{k}} & \text{if }p^{k}-1<r\leq p^{k+1}-1\text{ with }0\leq k<n-1\\
(n-1)(p-1)+\frac{r-p^{n-1}+1}{p^{n-1}} & r>p^{n-1}-1
\end{cases}.
\end{equation}
\begin{proof}
We consider various cases:

\begin{itemize}
\item \textbf{Case $0<r\leq1$}
\begin{eqnarray*}
\varphi_{L/F}(r) & = & \int_{0}^{r}\frac{dt}{(H_{0}:H_{t})}\\
 & = & \frac{1}{(H_{0}:H_{1})}\int_{0}^{r}dr\\
 & = & r.
\end{eqnarray*}
\item \textbf{Case} $1<r\leq p-1$
\begin{eqnarray*}
\varphi_{L/K}(r) & = & \int_{0}^{r}\frac{dt}{(G_{0}:G_{t})}\\
 & = & \int_{0}^{1}\frac{dt}{(G_{0}:G_{1})}+\int_{1}^{r}\frac{dt}{(G_{0}:G_{t})}\\
 & = & \frac{1}{p-1}+\int_{1}^{r}\frac{dt}{(G_{0}:G(m)^{1})}\\
 & = & \frac{r}{p-1}.
\end{eqnarray*}
Therefore, $\varphi_{L/F}(r)=r$.
\item \textbf{Case} $p^{k}-1<r\leq p^{k+1}-1$ with $1\leq k<n-1$
\begin{eqnarray*}
\varphi_{L/K}(r) & = & \int_{0}^{r}\frac{dt}{(G_{0}:G_{t})}\\
 & = & \sum_{i=0}^{k-1}\int_{(p^{i}-1)}^{(p^{i+1}-1)}\frac{dt}{(G_{0}:G_{t})}+\int_{p^{k}-1}^{r}\frac{dt}{(G_{0}:G_{t})}\\
 & = & \int_{0}^{1}\frac{dt}{(G_{0}:G_{1})}+\int_{1}^{p-1}\frac{dt}{(G_{0}:G(m)^{1})}+\sum_{i=1}^{k-1}\int_{p^{i}-1}^{p^{i+1}-1}\frac{dt}{(G_{0}:G(m)^{i+1}}\\
 &  & +\int_{p^{k}-1}^{r}\frac{dt}{(G_{0}:G(m)^{k+1}}\\
 & = & \frac{1}{p-1}+\frac{p-2}{p-1}+\sum_{i=1}^{k-1}\frac{p^{i+1}-p^{i}}{(p-1)p^{i}}+\frac{r-p^{k}+1}{(p-1)p^{k}}\\
 & = & k+\frac{r-p^{k}+1}{(p-1)p^{k}}.
\end{eqnarray*}
Therefore, $\varphi_{L/F}(r)=k(p-1)+\frac{r-p^{k}+1}{p^{k}}$.
\item \textbf{Case} $r>p^{n-1}-1$
\begin{eqnarray*}
\varphi_{L/K}(r) & = & \int_{0}^{r}\frac{dt}{(G_{0}:G_{t})}\\
 & = & \int_{0}^{p^{n-1}-1}\frac{dt}{(G_{0}:G_{1})}+\int_{p^{n-1}-1}^{r}\frac{dt}{(G_{0}:G_{t})}\\
 & = & (n-1)+\frac{r-p^{n-1}+1}{(p-1)p^{n-1}}.
\end{eqnarray*}
Therefore, $\varphi_{L/F}(r)=(n-1)(p-1)+\frac{r-p^{n-1}+1}{p^{n-1}}.$ 
\end{itemize}
\end{proof}
\end{prop}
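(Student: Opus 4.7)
The plan is to use the preceding lemma, which for $r \geq 1$ gives $\varphi_{L/F}(r) = (p-1)\varphi_{L/K}(r)$, to reduce the problem to computing $\varphi_{L/K}(r)$. The range $0 < r \leq 1$ will be handled directly: in this range $H_t = H$ (since $H = G(m)^1 = G_1$, and for $0 < t \leq 1$ the real-parameter convention forces $H_t = H_1 = H$), so $(H_0 : H_t) = 1$ and $\varphi_{L/F}(r) = r$, which is precisely the $k = 0$ instance of the stated formula.

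For $r \geq 1$, I would split the integral $\varphi_{L/K}(r) = \int_0^r dt/(G_0:G_t)$ at the jump points $1, p-1, p^2-1, \ldots, p^{n-1}-1$ of the lower ramification filtration. Using the explicit description of the ramification groups recalled from Serre, the index $(G_0:G_t)$ is piecewise constant: equal to $p-1$ on $(0, p-1]$, equal to $(p-1)p^i$ on $(p^i-1,\, p^{i+1}-1]$ for $1 \leq i \leq n-2$, and equal to $(p-1)p^{n-1}$ on $(p^{n-1}-1, \infty)$. Each bounded slab $(p^i-1, p^{i+1}-1]$ with $i \geq 1$ therefore contributes $(p^{i+1}-p^i)/((p-1)p^i) = 1$ to the integral, while the sub-interval $(0,1]$ contributes $1/(p-1)$ and the remainder $(1, p-1]$ of the first slab contributes $(p-2)/(p-1)$.

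Assembling these contributions, for $p^k - 1 < r \leq p^{k+1}-1$ with $1 \leq k < n-1$ one obtains
\[
\varphi_{L/K}(r) = \frac{1}{p-1} + \frac{p-2}{p-1} + (k-1) + \frac{r - p^k + 1}{(p-1)p^k} = k + \frac{r - p^k + 1}{(p-1)p^k},
\]
and the same computation pushed one slab further yields $(n-1) + (r-p^{n-1}+1)/((p-1)p^{n-1})$ for $r > p^{n-1}-1$. Multiplying through by $p-1$ via the preceding lemma then gives the two stated expressions for $\varphi_{L/F}(r)$. The argument is essentially a bookkeeping exercise; the only care required is in correctly identifying the index $(G_0:G_t)$ on each slab and in verifying continuity at the transition points $r = p^k - 1$. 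Since the substantive identity $\varphi_{L/F} = (p-1)\varphi_{L/K}$ on $[1,\infty)$ is already in hand, no deeper obstacle is anticipated.
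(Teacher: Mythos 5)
Your proposal is correct and follows essentially the same route as the paper: you compute $\varphi_{L/K}$ by integrating $1/(G_0:G_t)$ over the slabs cut out by the jumps of the lower-numbering filtration (using Serre's description of the $G_u$ for a cyclotomic extension), and then transfer to $\varphi_{L/F}$ via the preceding lemma $\varphi_{L/F}(r)=(p-1)\varphi_{L/K}(r)$ on $[1,\infty)$, treating $0<r\leq 1$ by a direct computation with $H_t$. The bookkeeping — the slab contributions $1/(p-1)$, $(p-2)/(p-1)$, and $1$ per full slab, plus the partial final slab — matches the paper's computation exactly.
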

Now write $T=\mathrm{R}_{L/F}\mathbb{G}_{\mathrm{m}}$ and let $\lambda_{T}$
be as denoted in Sec. \ref{sec:LLC}. It then immediately follows from
Prop. \ref{lem:HH_Calc}:
\begin{lem}
$\varphi_{L/K}(p^{n-1}r)>r$ $\forall r>0.$ Consequently, for all
positive depth $\chi\in\mathrm{Hom}(T(K),\mathbb{C}^{\times})$, $\mathrm{dep}_{T}(\chi)<\mathrm{dep}_{W_{K}}(\lambda_{T}(\chi))$. 
\end{lem}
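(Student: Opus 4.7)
My plan is to deduce the lemma as a straightforward consequence of the explicit Hasse--Herbrand computation of Proposition \ref{lem:HH_Calc} combined with Corollary \ref{cor:T}. Since $L/F$ is totally ramified of degree $p^{n-1}$, its ramification index is $p^{n-1}$, and Corollary \ref{cor:T} applied to the torus $T=\mathrm{R}_{L/F}\mathbb{G}_{\mathrm{m}}$ (viewed over its natural base field $F$) gives $\mathrm{dep}_{W_{F}}(\lambda_T(\chi))=\varphi_{L/F}(p^{n-1}\cdot\mathrm{dep}_T(\chi))$. Hence as soon as one establishes $\varphi_{L/F}(p^{n-1}r)>r$ for every $r>0$, the desired consequence about positive-depth characters follows at once by taking $r=\mathrm{dep}_T(\chi)$.

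The inequality itself I would prove by a concavity argument. Proposition \ref{lem:HH_Calc} exhibits $\varphi_{L/F}$ as piecewise linear and strictly concave on $[0,\infty)$ with $\varphi_{L/F}(0)=0$, slope $1$ on the initial segment $(0,p-1]$, and asymptotic slope $1/p^{n-1}$ on the terminal ray $(p^{n-1}-1,\infty)$. Concavity together with $\varphi_{L/F}(0)=0$ forces the map $s\mapsto\varphi_{L/F}(s)/s$ to be strictly decreasing on $(0,\infty)$, with infimum equal to the asymptotic slope $1/p^{n-1}$. Consequently $\varphi_{L/F}(s)/s>1/p^{n-1}$ for every $s>0$, and substituting $s=p^{n-1}r$ yields $\varphi_{L/F}(p^{n-1}r)>r$, as required.

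A more pedestrian alternative would be to substitute $s=p^{n-1}r$ directly into each piece of the formula in Proposition \ref{lem:HH_Calc} and verify the inequality range by range. I anticipate no real obstacle either way: the heavy computational lifting has already been absorbed into Proposition \ref{lem:HH_Calc}, and what remains here is merely a qualitative comparison of the initial slope $1$ and the asymptotic slope $1/p^{n-1}$ of $\varphi_{L/F}$, both of which are read off immediately from the piecewise formula.
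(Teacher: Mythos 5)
Your approach is the one the paper intends (the paper offers no proof beyond ``immediately follows from Prop.~\ref{lem:HH_Calc}''), and both the reduction via Corollary~\ref{cor:T} and the slope comparison are sound. Two small points are worth tightening. First, $\varphi_{L/F}$ is piecewise linear, so it is concave but not \emph{strictly} concave, and $s\mapsto\varphi_{L/F}(s)/s$ is only non-increasing, not strictly decreasing: on the initial segment $(0,p-1]$ one has $\varphi_{L/F}(s)/s\equiv 1$. The conclusion $\varphi_{L/F}(s)/s>1/p^{n-1}$ nevertheless holds, and the cleanest way to see it is exactly the comparison you gesture at: the initial slope $1$ strictly exceeds the terminal slope $1/p^{n-1}$ once $n\geq 2$ (i.e.\ once $L\neq F$), so $g(s):=\varphi_{L/F}(s)-s/p^{n-1}$ satisfies $g(0)=0$, is strictly increasing on $(0,p-1]$, and has non-negative derivative thereafter since all intermediate slopes $1/p^{k}$ dominate $1/p^{n-1}$; hence $g(s)>0$ for all $s>0$. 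Equivalently, the terminal linear piece has a strictly positive $y$-intercept. Second, you have silently and correctly read the statement as concerning $\varphi_{L/F}$ and $W_{F}$ rather than $\varphi_{L/K}$ and $W_{K}$: since $T=\mathrm{R}_{L/F}\mathbb{G}_{\mathrm m}$ is a torus over $F$ with $e_{L/F}=p^{n-1}$, Corollary~\ref{cor:T} produces $\varphi_{L/F}(p^{n-1}\cdot\mathrm{dep}_T(\chi))=\mathrm{dep}_{W_F}(\lambda_T(\chi))$, and indeed $\varphi_{L/K}(p^{n-1}r)>r$ fails for large $r$ when $p>2$ (the asymptotic ratio is $1/(p-1)$). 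So the lemma as printed has a typo which your reading repairs.
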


\section{Acknowledgement}

The authors would like to thank Anne-Marie Aubert for carefully going over this article and suggesting several improvements. 

\begin{bibdiv}
\begin{biblist}

\bib{AMPS2017}{article}{
    AUTHOR = {Aubert, Anne-Marie}, author={ Mendes, Sergio}, author={Plymen, Roger},author={
              Solleveld, Maarten},
     TITLE = {On {$L$}-packets and depth for {${\rm SL}_2(K)$} and its inner
              form},
   JOURNAL = {Int. J. Number Theory},
  FJOURNAL = {International Journal of Number Theory},
    VOLUME = {13},
      YEAR = {2017},
    NUMBER = {10},
     PAGES = {2545--2568},
      ISSN = {1793-0421},
   MRCLASS = {22E50 (20G05)},
  MRNUMBER = {3713091},
       URL = {https://doi.org/10.1142/S1793042117501421},
}

\bib{ABPS2016}{incollection}{
    AUTHOR = {Aubert, Anne-Marie} author={Baum, Paul}, author={Plymen, Roger}, author={
              Solleveld, Maarten},
     TITLE = {Depth and the local {L}anglands correspondence},
 BOOKTITLE = {Arbeitstagung {B}onn 2013},
    SERIES = {Progr. Math.},
    VOLUME = {319},
     PAGES = {17--41},
 PUBLISHER = {Birkh\"auser/Springer, Cham},
      YEAR = {2016},
   MRCLASS = {22E50},
  MRNUMBER = {3618046},
}

\bib{L97}{article}{
  title={Representations of abelian algebraic groups},
  author={Langlands, Robert},
  journal={Pacific Journal of Mathematics},
  volume={181},
  number={3},
  pages={231--250},
  year={1997},
  publisher={Mathematical Sciences Publishers}
}

\bib{MM2015}{article}{
  title={Langlands parameters associated to special maximal parahoric spherical representations},
  author={Mishra, Manish},
  journal={Proceedings of the American Mathematical Society},
  volume={143},
  number={5},
  pages={1933--1941},
  year={2015}
}

\bib{MP96}{article}{
    AUTHOR = {Moy, Allen},
    author={Prasad, Gopal},
     TITLE = {Jacquet functors and unrefined minimal {$K$}-types},
   JOURNAL = {Comment. Math. Helv.},
  FJOURNAL = {Commentarii Mathematici Helvetici},
    VOLUME = {71},
      YEAR = {1996},
    NUMBER = {1},
     PAGES = {98--121},
      ISSN = {0010-2571},
   MRCLASS = {22E50 (22E35)},
  MRNUMBER = {1371680},
MRREVIEWER = {Mark Reeder},
       URL = {https://doi.org/10.1007/BF02566411},
}

\bib{Serre}{book}{
    AUTHOR = {Serre, Jean-Pierre},
     TITLE = {Local fields},
    SERIES = {Graduate Texts in Mathematics},
    VOLUME = {67},
      NOTE = {Translated from the French by Marvin Jay Greenberg},
 PUBLISHER = {Springer-Verlag, New York-Berlin},
      YEAR = {1979},
     PAGES = {viii+241},
      ISBN = {0-387-90424-7},
   MRCLASS = {12Bxx},
  MRNUMBER = {554237},
}

\bib{Yu03}{article}{

Author = {Yu, Jiu-Kang},
Title = {Smooth models associated to concave functions in Bruhat-Tits theory},
Journal={preprint}

}

\bib{Yu}{incollection}{
	Address = {Providence, RI},
	Author = {Yu, Jiu-Kang},
	Booktitle = {Ottawa lectures on admissible representations of reductive {$p$}-adic groups},
	Mrclass = {11S37 (22E50)},
	Mrnumber = {2508725 (2009m:11201)},
	Mrreviewer = {Michael M. Schein},
	Pages = {177--183},
	Publisher = {Amer. Math. Soc.},
	Series = {Fields Inst. Monogr.},
	Title = {On the local {L}anglands correspondence for tori},
	Volume = {26},
	Year = {2009}}

\end{biblist}
\end{bibdiv}

\end{document}